\def \r{\mathbb R}
\def \s{\mbox{${\mathbb S}$}}
\def \h{\mathbb  H}
\DeclareMathOperator{\sech}{sech}
\DeclareMathOperator{\grad}{grad}
\DeclareMathOperator{\cst}{constant}
\DeclareMathOperator{\arccot}{arccot}
\theoremstyle{plain}
\newtheorem{theorem}                 {Theorem}      [section]
\newtheorem{proposition}  [theorem]  {Proposition}
\newtheorem{lemma}        [theorem]  {Lemma}
\theoremstyle{definition}
\newtheorem{example}      [theorem]  {Example}
\newtheorem{remark}       [theorem]  {Remark}
\begin{document}

\subjclass[2000]{53C42, 53B15}

\title{Geodesics on an invariant surface}

\author{Stefano Montaldo}
\address{Universit\`a degli Studi di Cagliari\\
Dipartimento di Matematica e Informatica\\
Via Ospedale 72\\
09124 Cagliari, Italia}

\email{montaldo@unica.it}

\author{Irene I. Onnis}

\address{Departamento de Matem\'{a}tica, C.P. 668\\ ICMC,
USP, 13560-970, S\~{a}o Carlos, SP\\ Brasil}

\email{onnis@icmc.usp.br}

\thanks{The first author was supported by: Fondo per il sostegno della ricerca di base per lo start-up dei giovani ricercatori, University of Cagliari - Italy.
The second author was supported by: Visiting professor program, Regione Autonoma della Sardegna - Italy}

\begin{abstract}
We study the geodesics on an invariant surface of a three
dimensional Riemannian manifold. The main results are: the
characterization of geodesic orbits; a Clairaut's relation and its
geometric interpretation in some remarkable three dimensional spaces; 
the local description of the geodesics; the explicit
description of geodesic curves on an invariant surface with
constant Gauss curvature.
\end{abstract}

\maketitle

\section{Introduction and Preliminaries}
The theory of surfaces in  three dimensional manifolds is having, in the last decades, a new golden age
evidenced by the great number of papers on the subject. 
An important geometric class of surfaces in a three dimensional manifold is that of invariant surfaces, that is, as described below, surfaces which are invariant under the action of a one-parameter group of isometries of the ambient space. Invariant surfaces have been classified,
according to the value of their Gaussian or mean curvature, in many remarkable three dimensional spaces (see, for example, \cite{RCPPAR1,RCPPAR2,FIMEPE,LO,MO1,MO,Onnis,SETO,SE,TO}). 

In this paper we consider the problem of understanding the geodesics
on an invariant surfaces of a three dimensional manifold.  

To this aim we  briefly recall the definition and the geometry of invariant surfaces.

Let
$({N}^3,g)$ be a three dimensional Riemannian manifold and let $X$
be a Killing vector field on ${N}$. Then $X$ generates a
one-parameter subgroup $G_X$ of the group of isometries of
$({N}^3,g)$. Let now $f:{M}^2\to ({N}^3,g)$ be an immersion from a
surface ${M}^2$ into ${N}^3$ and assume that $f({M})\subset {N}_r$
(the regular part of $N$,  that is, the subset consisting of
points belonging to principal orbits). We say that $f$ is a $G_X$-{\it
equivariant} immersion, and $f({M})$ a $G_X$-{\it invariant} surface
of ${N}$, if there exists an action of $G_X$ on ${M}^2$ such that
for any $x\in {M}^2$ and $a\in G_X$ we have $f(a\,x)=a f(x)$.

A $G_X$-equivariant immersion $f:{M}^2\to ({N}^3,g)$ induces on ${M}^2$ a Rieman\-nian metric,
the pull-back metric, denoted by $g_f$ and called the $G_X$-{\it invariant induced metric}.

Let $f:{M}^2\to ({N}^3,g)$ be a $G_X$-equivariant immersion and let us endow
${M}^2$ with the $G_X$-invariant induced metric $g_f$. 
Assume that $f({M}^2)\subset
{N}_r$ and that ${N}/G_X$ is connected. Then $f$ induces an
immersion $\tilde{f}:{M}/G_X\rightarrow {N}_r/G_X$ between the orbit
spaces and, also, the space ${N}_r/G_X$ can be equipped with a
Riemannian metric, the {\it quotient metric}, so that the quotient
map $\pi:{N}_r\to {N}_r/G_X$ becomes  a Riemannian submersion.

For later use we describe the quotient metric of the regular part of the orbit space
$N/G_X$. It is well known (see, for example, \cite{Olver})  that
$N_r/G_X$ can be locally parametrized by the invariant functions of
the Killing vector field $X$.  If $\{\xi_1,\xi_2\}$ is a complete set of
invariant functions on a $G_X$-invariant subset  of $N_r$, then the
quotient metric is given by  $\tilde{g}=~\sum_{i,j=1}^{2} h^{ij}
d\xi_i\otimes d\xi_j$ where $(h^{ij})$ is the inverse of the matrix
$(h_{ij})$ with entries $h_{ij}=g(\nabla \xi_i,\nabla \xi_j)$. 

We can picture the above construction using the following diagram:
$$
\begin{CD}
({M}^2,g_f) @>f>> ({N_r}^3,g)\\
@V VV @V\pi VV\\
{M}^2/G_X @>\tilde{f}>> ({N}^3_{r}/G_X,\tilde{g}).
\end{CD}
$$

Using the above setting we can give a local description of the $G_X$-invariant surfaces of
${N}^3$. Let $\tilde{\gamma}:(a,b)\subset\r\to({N}^3/G_X,\tilde{g})$
be a curve parametrized by arc length and let
$\gamma:(a,b)\subset\r\to {N}^3$ be a lift of $\tilde{\gamma}$, such
that $d\pi(\gamma')=\tilde{\gamma}'$. If we denote by
$\phi_v,\;v\in(-\epsilon,\epsilon)$, the local flow of the Killing
vector field $X$, then the map
\begin{equation}\label{eq-psi}
\psi:(a,b)\times(-\epsilon,\epsilon)\to {N}^3\,,\quad \psi(u,v)=\phi_v(\gamma(u)),
\end{equation}
defines a parametrized
$G_X$-invariant surface.

Conversely, if $f({M}^2)$ is a $G_X$-invariant immersed surface in ${N}^3$,
then $\tilde{f}$ defines a curve in $({N}^3/G_X,\tilde{g})$ that can be locally
parametrized by arc length. The curve $\tilde{\gamma}$ is generally called the {\it profile}
curve of the invariant surface.

Observe that, as the $v$-coordinate curves are the orbits of the
action of the one-parameter group of isometries $G_X$, the
coefficients of the pull-back metric $g_f=E\, du^2 +2 F\, dudv + G\,
dv^2$ are function only of $u$ and are given by:
$$
\begin{cases}
E=g(\psi_u,\psi_u)=g(d\phi_v(\gamma'),d\phi_v(\gamma'))\\
F=g(\psi_u,\psi_v)=g(d\phi_v(\gamma'),X)\\
G=g(\psi_v,\psi_v)=g(X,X).\\
\end{cases}
$$
Putting  $\omega^2(u):=\|X(\gamma(u))\|^2_g=G$, we have that  (see
\cite{MO})
\begin{equation}
\label{eq-E} E\,G-F^2=G=\omega(u)^2.
\end{equation}
\begin{remark}
Note that \eqref{eq-E} is immediate in the case $\gamma$ is a
horizontal lift of $\tilde{\gamma}$. In fact, in this case, $F=0$
and $E=1$. This fact might suggest to consider always the case when $\gamma$ is a horizontal lift. However, in many cases (see Remark~\ref{re:lifthor}), it could be rather difficult to find a horizontal lift.
Thus it is more convenient to write down the theory in the general case without the assumption that $E=1$ and $F=0$. We will see that everything works nicely thanks to \eqref{eq-E}.
\end{remark}
 Using \eqref{eq-E} and Bianchi's formula for the Gauss curvature
we find that
\begin{equation}\label{eq-K}
K(u)=-\frac{\omega_{uu}(u)}{ \omega(u)}.
\end{equation}
As an immediate consequence we have
\begin{theorem}[\cite{MO}]\label{teo-main}
Let $f:M^2\to(N^3,g)$ be a $G_X$-equivariant immersion,
$\tilde{\gamma}:(a,b)\subset\r\to(N_r^3/G_X,\tilde{g})$
 a parametrization by arc length of the profile curve of $M$ and $\gamma$ a lift of $\tilde{\gamma}$.
Then, the induced metric $g_f$ is of constant Gauss curvature $K$ if
and only if the function $\omega(u)$
 satisfies the following differential equation
\begin{equation}\label{eq-main}
\omega_{uu}(u) + K \omega(u) =0.
\end{equation}

\end{theorem}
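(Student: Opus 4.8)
The plan is to simply combine equation~\eqref{eq-K} with the definition of constant Gauss curvature. Indeed, the content of Theorem~\ref{teo-main} is almost immediate once formula~\eqref{eq-K} is in hand, so the ``proof'' is really a matter of reading that identity correctly and checking the logical equivalence in both directions.

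First I would recall that, by construction, the induced metric $g_f$ on $M^2$ has Gauss curvature $K(u)$ given by~\eqref{eq-K}, namely $K(u)=-\omega_{uu}(u)/\omega(u)$, where $\omega(u)=\|X(\gamma(u))\|_g$ is a positive function (it is positive precisely because $f(M)\subset N_r$, so the orbits are principal and $X$ does not vanish along $\gamma$). The hypothesis that $g_f$ has constant Gauss curvature means that the function $u\mapsto K(u)$ is a constant, which I will continue to denote by $K$. Multiplying the identity $K=-\omega_{uu}(u)/\omega(u)$ through by $\omega(u)$ — which is legitimate since $\omega(u)\neq0$ — yields exactly $\omega_{uu}(u)+K\,\omega(u)=0$, i.e.\ equation~\eqref{eq-main}. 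This gives the ``only if'' direction.

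Conversely, suppose $\omega$ satisfies~\eqref{eq-main} for some constant $K$. Then for every $u$ we have $\omega_{uu}(u)=-K\,\omega(u)$, and dividing by the nonvanishing factor $\omega(u)$ gives $-\omega_{uu}(u)/\omega(u)=K$; by~\eqref{eq-K} the left-hand side is the Gauss curvature of $g_f$ at the point with parameter $u$, so the Gauss curvature equals the constant $K$ everywhere, which is the ``if'' direction. This completes the equivalence.

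The only point that requires any care — and hence the ``main obstacle'', though it is a mild one — is making sure that $\omega$ never vanishes on $(a,b)$, so that the passage between~\eqref{eq-K} and~\eqref{eq-main} is reversible; this is guaranteed by the standing assumption $f(M^2)\subset N_r$, under which the Killing field $X$ is nowhere zero along the lift $\gamma$, whence $G=\omega(u)^2>0$. Everything else is the algebraic manipulation above together with a pointer to~\eqref{eq-K}, which in turn rests on~\eqref{eq-E} and Bianchi's (Brioschi's) formula for the Gauss curvature of a metric whose coefficients depend on $u$ alone. Since~\eqref{eq-E} and~\eqref{eq-K} are already established in the excerpt, no further computation is needed.
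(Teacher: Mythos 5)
Your proposal is correct and follows exactly the route the paper intends: the theorem is stated as an immediate consequence of \eqref{eq-K}, and your argument simply records the equivalence $K(u)=-\omega_{uu}/\omega=\cst \Leftrightarrow \omega_{uu}+K\omega=0$ using the nonvanishing of $\omega$ on the regular part. Nothing further is needed.
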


\section{Geodesic equations and the Clairaut's relation}
Let $M^2\subset(N^3,g)$ be a $G_X$-invariant surface, locally parametrized by
\eqref{eq-psi}, then the induced metric is 
$$
g_f=E(u)\,du^2+2\,F(u)\,du\,dv+\omega^2(u)\,dv^2.
$$ 
Now let $\alpha(s)=\psi(u(s),v(s))$ be a geodesic parametrized by arc length, then 
$u(s)$ and $v(s)$  satisfy the Euler Lagrange system 
\begin{equation}\label{eq:e-l-system}\left\{\begin{aligned}
\frac{d}{ds}\bigg(\frac{\partial L}{\partial {u'}}\bigg)-\frac{\partial L}{\partial u}&=0,\\
\frac{d}{ds}\bigg(\frac{\partial L}{\partial {v'}}\bigg)-\frac{\partial L}{\partial v}&=0,
\end{aligned}\right.
\end{equation}
where $L=1/2 [E(s){u'(s)}^2+2F(s)u'(s)v'(s)+\omega^2(s){v'(s)}^2]$. Note that with $()'$ we have denoted the derivative with respect to $s$ and when we  restrict a function $h$ defined on $M$ to a curve $\alpha(s)$
we have used the notation $h(s)$. Moreover, in the sequel, to simplify the notation, we will omit the explicit dependency on the coordinates, when this does not create confusion.
Expanding \eqref{eq:e-l-system} we have
\begin{equation}\label{eqgeod}
\left\{\begin{aligned}
E\,{u''}+F\,{v''}+\frac{E_u\,{u'}^2}{2}-\omega\,\omega_u\,{v'}^2&=0,\\
(F\,{u'}+\omega^2\,{v'})'&=0,
\end{aligned}\right.
\end{equation}
where we have denoted by $()_u$ the derivative with respect to $u$.

\begin{proposition}\label{orbit}
Let ${M}$ be a $G_X$-invariant surface of $({N}^3,g)$. Then an orbit $\alpha$ is
a geodesic on ${M}$ if and only if ${(\grad_{M}\omega)}_{|\alpha}=0$.
\end{proposition}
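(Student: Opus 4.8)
The plan is to exploit that the orbits of $G_X$ are exactly the $v$-coordinate curves of the parametrization \eqref{eq-psi}. So fix an orbit $\alpha$ and write it as $\alpha(s)=\psi(u_0,v(s))$ with $u_0$ constant; then $u'\equiv 0$ and $u''\equiv 0$ along $\alpha$. Since $\omega$ is a function of $u$ alone, $\omega$ is constant along $\alpha$, and hence the speed $\|\alpha'\|^2_{g_f}=\omega(u_0)^2\,(v')^2$ is constant along $\alpha$; because a geodesic has constant speed and $\omega(u_0)\neq 0$ on the regular part $N_r$, we conclude that $v'$ is a nonzero constant and $v''\equiv 0$. (Equivalently, one may simply reparametrize $\alpha$ by arc length from the start.)

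Next I substitute $u'=u''=v''=0$ into the geodesic system \eqref{eqgeod}. The second equation, $(F\,u'+\omega^2\,v')'=0$, becomes $(\omega(u_0)^2\,v')'=0$, which holds automatically. The first equation collapses to
\begin{equation*}
-\,\omega(u_0)\,\omega_u(u_0)\,(v')^2=0,
\end{equation*}
and since $\omega(u_0)\neq 0$ and $v'\neq 0$, this is equivalent to $\omega_u(u_0)=0$. Therefore $\alpha$ is a geodesic on $M$ if and only if $\omega_u$ vanishes at $u_0$, i.e.\ along $\alpha$.

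It remains to rephrase $\omega_u=0$ in intrinsic terms. Since $\omega=\omega(u)$ depends only on $u$, for the induced metric $g_f=E\,du^2+2F\,du\,dv+\omega^2\,dv^2$ one has $\|\grad_{M}\omega\|^2_{g_f}=g_f^{uu}\,(\omega_u)^2$, where $g_f^{uu}$ is the $(u,u)$-entry of the inverse of the matrix with entries $E,F,F,\omega^2$. Its determinant is $E\omega^2-F^2=\omega^2$ by \eqref{eq-E}, so $g_f^{uu}=\omega^2/\omega^2=1$ and hence $\|\grad_{M}\omega\|^2_{g_f}=(\omega_u)^2$. Consequently $(\grad_{M}\omega)_{|\alpha}=0$ precisely when $\omega_u(u_0)=0$, which completes the equivalence. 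I do not expect a real obstacle here; the only points requiring a little care are the normalization of the parameter (settled by the constant speed of geodesics together with $\omega$ being constant on an orbit) and the use of \eqref{eq-E} to identify $g_f^{uu}=1$ — exactly the simplification the remark after \eqref{eq-E} anticipates.
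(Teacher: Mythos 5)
Your proof is correct and follows essentially the same route as the paper: parametrize the orbit as a $v$-coordinate curve, observe that the second equation of \eqref{eqgeod} is automatic and the first reduces to $\omega_u(u_0)=0$, and then use \eqref{eq-E} to identify $\omega_u=0$ with the vanishing of $\grad_M\omega$. The only cosmetic difference is that you compute $\|\grad_M\omega\|^2_{g_f}=(\omega_u)^2$ via $g_f^{uu}=1$, while the paper writes out the gradient vector $\omega_u\bigl(\partial_u-\tfrac{F}{G}\partial_v\bigr)$ explicitly; both are immediate consequences of $EG-F^2=\omega^2$.
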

\begin{proof} Parametrizing  the surface ${M}$, locally,
by $\psi(u,v)$ (see \eqref{eq-psi}) the parametrization by arc length of an orbit $u=\cst=u_0\in(a,b)$ is given by
$$
\alpha (s)=\psi(u(s),v(s))=\psi\Big(u_0,\frac{s}{\omega(u_0)}\Big).
$$
Then the second equation of \eqref{eqgeod} automatically holds while the first becomes $\omega_u(u_0)=0$. Now, taking into account  \eqref{eq-E}, the gradient of $\omega$ becomes
$$
\grad_{M}\omega=\omega_u(\frac{\partial }{\partial u}-\frac{F}{G}\frac{\partial }{\partial v})
$$
and we conclude.
\end{proof}

\begin{remark}\label{re:eqgeod-bis}
If $\alpha$ is not an orbit (i.e. $u'(s)\neq 0$),
\eqref{eqgeod} is equivalent to
\begin{equation}\label{eqgeod-bis}
\left\{\begin{aligned}
\|\alpha'\|^2=u'^2 E+2 u' v' F + v'^2 \omega^2&=1,\\
(F\,{u'}+\omega^2\,{v'})'&=0.
\end{aligned}\right.
\end{equation}
To see this  we only have to show that \eqref{eqgeod-bis} implies \eqref{eqgeod}.
Differentiating  with respect to $s$ the equation  
 $$
 1=g(\alpha',\alpha')= E\, u'^2+2\, F\,u'\,v'+\omega^2\,v'^2
 $$ 
 and using that $\alpha$ is not an orbit, we find
\begin{equation}\nonumber
E\,u''+F\, v''+\frac{E_u\,u'^2}{2}=-\frac{( F\,v'\,u''+F_u\,u'^2\,v'+\omega^2\,v'\,v'')}{u'}-\omega\,\omega_u\,v'^2.
\end{equation}
The latter gives
\begin{equation}\nonumber
\begin{aligned}
E\,u''+F\, v''+\frac{E_u\,u'^2}{2}-\omega\,\omega_u\,{v'}^2&=-\frac{{v'}}{{u'}}\,(F\,u''+F_u\,u'^2+2\,\omega \,\omega_u\,u'\,v'+\omega^2\, v'')\\
&=-\frac{{v'}}{{u'}}\,(F\,u'+\omega^2\,v')'\\&=0.
\end{aligned}
\end{equation}
\end{remark}

\begin{proposition}
Let $\alpha(s)$ be a geodesic parametrized by arc length on a $G_X$-invariant surface $M\subset({N}^3,g)$ 
which is orthogonal to all the orbits that it meets. Then $\alpha$ is a geodesic.
\end{proposition}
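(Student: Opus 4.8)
The plan is to reduce the statement to the reformulation of the geodesic equations in Remark~\ref{re:eqgeod-bis}. Since the $v$-coordinate curves are the orbits and $\psi_v=X$, saying that $\alpha$ is orthogonal to every orbit it meets means $g(\alpha',X)=0$ along $\alpha$. Writing $\alpha'=u'\,\psi_u+v'\,\psi_v$ and recalling that $g(\psi_u,\psi_v)=F$ and $g(\psi_v,\psi_v)=\omega^2$, this orthogonality condition is exactly the identity
$$
F\,u'+\omega^2\,v'\equiv 0 \quad\text{along } \alpha .
$$

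From this single identity I would extract the two equations of \eqref{eqgeod-bis}. The second one, $(F\,u'+\omega^2\,v')'=0$, is immediate, since the bracketed quantity is identically zero along $\alpha$. For the first one, I would substitute $v'=-F\,u'/\omega^2$ into $\|\alpha'\|^2=E\,u'^2+2F\,u'v'+\omega^2 v'^2$; a short computation reduces this to $\|\alpha'\|^2=u'^2\,(E\,\omega^2-F^2)/\omega^2$, and by the basic relation \eqref{eq-E}, which gives $E\,\omega^2-F^2=\omega^2$, this equals $u'^2$. As $\alpha$ is parametrized by arc length, we get $u'^2=1$; in particular $u'$ never vanishes, so $\alpha$ is not an orbit, and the first equation of \eqref{eqgeod-bis} holds as well.

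Having checked that $\alpha$ is not an orbit and that both equations of \eqref{eqgeod-bis} are satisfied, I would invoke Remark~\ref{re:eqgeod-bis}: for curves with $u'\neq 0$, the system \eqref{eqgeod-bis} is equivalent to the Euler--Lagrange system \eqref{eqgeod}, hence $\alpha$ satisfies \eqref{eqgeod} and is a geodesic of $M$. I do not expect a genuine obstacle here: the whole argument rests on the identity \eqref{eq-E}, which is precisely what makes $\|\alpha'\|^2$ collapse to $u'^2$, together with the elementary observation that an arc-length curve everywhere orthogonal to the orbit foliation cannot be tangent to it. The only point requiring a little care is to establish $u'\neq 0$ before applying Remark~\ref{re:eqgeod-bis}, and the computation above delivers this automatically.
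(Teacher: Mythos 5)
Your proof is correct and follows essentially the same route as the paper: both reduce the claim to Remark~\ref{re:eqgeod-bis}, observing that the arc-length hypothesis gives the first equation of \eqref{eqgeod-bis} and that orthogonality to the orbits, i.e.\ $g(\alpha',X)=F\,u'+\omega^2\,v'=0$, gives the second. The only difference is that you explicitly justify $u'\neq 0$ via the computation $\|\alpha'\|^2=u'^2(E\,\omega^2-F^2)/\omega^2=u'^2$ using \eqref{eq-E}, a point the paper's proof passes over with the bare assertion that $\alpha$ cannot be an orbit; your version is slightly more careful on exactly the hypothesis needed to invoke the remark.
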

\begin{proof}
We can, locally, parametrize $\alpha$ by $\alpha(s)=\psi(u(s),v(s))$, where $\psi$ is the local parametrization of $M$ given in \eqref{eq-psi}. Since $\alpha$ cannot be an orbit, we only have to show that the second equation of \eqref{eqgeod-bis} is satisfied.  From $g(\alpha',X)=0$ we get
$F u'+\omega^2 v'=0$.
\end{proof}


\begin{theorem}[Clairaut's Theorem]
Let $\alpha(s)$ be a geodesic parametrized by arc length on a $G_X$-invariant surface $M\subset({N}^3,g)$ and let $\theta(s)$ be the angle under which the curve $\alpha$ meets the orbits of $X$. Then
\begin{equation}\label{clairaut}
    \omega(s)\,\cos\theta(s)=c=\cst.
\end{equation}
Conversely, if $\omega\cos\theta$ is constant along an arc length parametrized curve $\alpha$ on $M$, that is not an orbit of $M$, then $\alpha$ is a geodesic.
\end{theorem}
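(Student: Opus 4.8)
The plan is to compute the quantity $\omega\cos\theta$ directly in the local coordinates $\psi(u,v)$ from \eqref{eq-psi} and to recognize it as the first integral already present in the geodesic system \eqref{eqgeod}. Write $\alpha(s)=\psi(u(s),v(s))$, so that $\alpha'=u'\psi_u+v'\psi_v$. Since the $v$-coordinate curves are the orbits of $G_X$, the tangent direction to the orbit through $\alpha(s)$ is $\psi_v=X$, and by \eqref{eq-E} we have $\|\psi_v\|=\sqrt{G}=\omega(s)$. Because $\alpha$ is parametrized by arc length, $\|\alpha'\|=1$, so the angle $\theta(s)$ between $\alpha'$ and the orbit satisfies
$$
\cos\theta(s)=\frac{g(\alpha',\psi_v)}{\|\alpha'\|\,\|\psi_v\|}=\frac{g(\alpha',\psi_v)}{\omega(s)}.
$$
Expanding $g(\alpha',\psi_v)=u'\,g(\psi_u,\psi_v)+v'\,g(\psi_v,\psi_v)=F\,u'+\omega^2\,v'$ and multiplying by $\omega$ gives the key identity
$$
\omega(s)\,\cos\theta(s)=F\,u'+\omega^2\,v'.
$$

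For the direct implication it then suffices to note that the right-hand side is constant along $\alpha$: this is exactly the content of the second equation of the geodesic system \eqref{eqgeod}, which holds because $\alpha$ is a geodesic. Hence $\omega\cos\theta=c$ is constant, which is \eqref{clairaut}.

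For the converse, suppose $\alpha$ is an arc length parametrized curve on $M$, not an orbit, with $\omega\cos\theta\equiv c$. By the identity above, $F\,u'+\omega^2\,v'\equiv c$, hence $(F\,u'+\omega^2\,v')'=0$, which is the second equation of \eqref{eqgeod-bis}; the first equation of \eqref{eqgeod-bis} is just $\|\alpha'\|^2=1$, valid since $\alpha$ has unit speed. As $\alpha$ is not an orbit, Remark~\ref{re:eqgeod-bis} shows that \eqref{eqgeod-bis} implies \eqref{eqgeod}, so $\alpha$ is a geodesic.

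I do not expect a genuine obstacle here: the whole argument is essentially a one-line computation once $\psi_v$ is identified with the Killing field $X$ and $\|X\|$ with $\omega$. The only points needing a little care are the use of \eqref{eq-E} to get $\|\psi_v\|=\omega$ (rather than some other expression for $\sqrt{G}$) and the observation that, since $\alpha$ has unit speed, no renormalization of $\alpha'$ enters the formula for $\cos\theta$. Conceptually, the theorem is just the statement that the classical Clairaut invariant coincides with the Noether-type first integral associated to the one-parameter isometry group $G_X$.
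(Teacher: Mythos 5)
Your proof is correct and follows essentially the same route as the paper: both identify $\omega\cos\theta$ with $g(\alpha',\psi_v)=F\,u'+\omega^2\,v'$, use the second equation of \eqref{eqgeod} for the direct implication, and invoke Remark~\ref{re:eqgeod-bis} for the converse. (Only a cosmetic point: $\|\psi_v\|=\omega$ comes from the definition $\omega^2=G=g(X,X)$ rather than from \eqref{eq-E}.)
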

\begin{proof}
Locally the surface $M$ can be parametrized by \eqref{eq-psi}  and the curve $\alpha$ by
$\alpha(s)=\psi(u(s),v(s))$. Since $\alpha$ is a geodesic, from the second equation of \eqref{eqgeod}, we have
\begin{equation}\label{eqcost}
    F(s)\,{u'}(s)+\omega(s)^2\,{v'}(s)=c\in\r.
\end{equation}
Then the angle $\theta(s)$ satisfies 
\begin{eqnarray}
\omega(s) \cos\theta(s)&=&g(\alpha',X)= g(\alpha',\psi_v)\nonumber\\
&=&F(s)\,{u'}(s)+\omega(s)^2\,{v'}(s)=c.
\end{eqnarray}
Conversely, let $\alpha(s)=\psi(u(s),v(s))$ be a curve on $M$ parametrized by arc length such that  $\omega(s)\,\cos\theta(s)=c\in\r$ along $\alpha$.  Assume that $\alpha$ is not an orbit, then, taking into account Remark~\ref{re:eqgeod-bis}, we only have to show that the second equation of \eqref{eqgeod-bis} is satisfied. We have
$$
F(s)\,{u'}(s)+\omega(s)^2\,{v'}(s)=  g(\alpha',\psi_v)=\omega(s)\,\cos\theta(s)=c.
$$
\end{proof}

We call the constant $c$ associated with each geodesic $\alpha$ the {\it slant} of $\alpha$. Note that the geodesics with slant $c=0$ are those orthogonal to the orbits.

\begin{remark}\label{re:omegac}
Since $|\cos\theta(s)|\leq1$,  \eqref{clairaut} implies that $\omega(s)\geq |c|$, hence $\alpha$ must lies entirely in the region of the invariant surface where $\omega \geq |c|$. Moreover, if $\alpha$ is not an orbit then $\omega>|c|$.
\end{remark}

\begin{example}[Rotational surfaces in $\r^3$]
We considerer the case of rotational surfaces in the Euclidean three dimensional space $(\r^3,g)$, with $g=dx^2+dy^2+dz^2$, assuming (without loss of generality) that the rotation is about the $z$-axes. Then the Killing vector field is
$X=y\frac{\partial}{\partial x}-x\frac{\partial}{\partial y}$.
In this case the Clairaut' relation \eqref{clairaut} becomes the classical ones
$$
r(s)\,\cos\theta(s)= c,
$$
where $r(s)$ represents the radius of the orbit.
\end{example}

\subsection{The Clairaut's relation for invariant surfaces in $\h^2\times\r$}
Let $\h^{2}=\{(x,y)\in\r^2\,:\,y>0\}$ be the half plane model of the
hyperbolic plane and consider $\h^2\times\r$ endowed with the
product metric
\begin{equation}\label{eq:metric-h2xr}
g=\frac{dx^2+dy^2}{y^2}+dz^2.
\end{equation}

The Lie algebra of the infinitesimal isometries of the product
$(\h^2\times\r,g)$ admits the following bases of Killing vector
fields
\begin{align*}
X_1&=\frac{(x^2-y^2+1)}{2}\frac{\partial}{\partial
x}+xy\frac{\partial}{\partial
y}\\
X_2&=\frac{\partial}{\partial x}\\
X_3&=x\frac{\partial}{\partial x}+y\frac{\partial}{\partial y}\\
X_4&=\frac{\partial}{\partial z}.
\end{align*}

The class of invariant surfaces in $\h^2\times\r$ can be
divided into three subclasses according to the following

\begin{proposition}[\cite{Onnis}]\label{lem-red}
Any surface in $\h^2\times\r$ which is invariant under the action of
a one-parameter subgroup of isometries $G_X$, generated by a Killing
vector field $X=\sum_{i=1}^4a_i X_i$, $a_i\in\r$, is congruent to a
surface invariant under the action of one of the following groups
$$
G_{14}=G_{X_1+bX_4}, \quad G_{24}=G_{aX_2+bX_4},\quad G_{34}=G_{X_3+bX_4},
$$
where $a,b\in\r.$
\end{proposition}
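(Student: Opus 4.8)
The strategy is to convert the statement into a classification of the Killing fields $X=\sum_{i=1}^{4}a_iX_i$ up to the equivalence that produces congruent invariant surfaces. Two observations legitimise this. First, for every nonzero $\lambda\in\r$ one has $G_{\lambda X}=G_X$ (the two one-parameter groups consist of the same isometries, merely reparametrised), so the $G_X$-invariant surfaces coincide with the $G_{\lambda X}$-invariant ones. Second, if $\Phi$ is an isometry of $(\h^2\times\r,g)$ and $Y=d\Phi(X)$, then $\Phi$ conjugates the flow of $X$ into the flow of $Y$, hence $\Phi$ carries $G_X$-invariant surfaces onto $G_Y$-invariant ones. So it is enough to show that, after applying the differential of a suitable isometry and then rescaling, every $X=\sum_{i=1}^{4}a_iX_i$ becomes a Killing field of the form $X_1+bX_4$, or $aX_2+bX_4$, or $X_3+bX_4$.

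Next I would pin down how isometries act on Killing fields. Since $X_4=\partial_z$ is parallel for the product metric while $\h^2$ admits no nonzero parallel vector field, every isometry of $\h^2\times\r$ preserves the product structure (this is the de Rham decomposition theorem in this concrete situation) and therefore has the form $\Phi(p,z)=(\varphi(p),\,\varepsilon z+z_0)$ with $\varphi$ an isometry of $\h^2$, $\varepsilon\in\{\pm1\}$ and $z_0\in\r$. Putting $V:=a_1X_1+a_2X_2+a_3X_3$, which is the generic Killing field of $\h^2$ (and recalling that $X_4$ commutes with $X_1,X_2,X_3$), one gets $d\Phi(X)=d\varphi(V)+\varepsilon\,a_4X_4$: the $\r$-part changes only by a sign and the $\h^2$-part changes by the adjoint action of the isometry group of $\h^2$, and the two do not mix. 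The problem thus reduces to classifying $V$ up to conjugacy and nonzero scaling.

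Here I would invoke the classical trichotomy in $\mathfrak{sl}(2,\r)$: a nonzero Killing field of $\h^2$ is, up to conjugation by an isometry and multiplication by a nonzero constant, of exactly one of three types --- elliptic, parabolic or hyperbolic --- according to whether it vanishes somewhere in $\h^2$, fixes a single point of $\partial_\infty\h^2$, or fixes two points of $\partial_\infty\h^2$. For $V=a_1X_1+a_2X_2+a_3X_3$ the type is detected by the sign of the quadratic form $Q(V)=a_1^2+2a_1a_2-a_3^2$ (proportional to the Killing form): $Q>0$ is elliptic, $Q=0$ with $V\neq0$ is parabolic, $Q<0$ is hyperbolic. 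A direct check shows that $X_1$ vanishes at $(0,1)$, that $X_2=\partial_x$ fixes only $\infty$, and that $X_3=x\partial_x+y\partial_y$ fixes $0$ and $\infty$, so $X_1,X_2,X_3$ represent the three orbits; hence, if $V\neq0$, there exist an isometry $\varphi$ of $\h^2$ and a constant $c\neq0$ with $d\varphi(V)=cX_k$ for some $k\in\{1,2,3\}$, while $d\Phi$ leaves the $X_4$-coefficient untouched up to sign.

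Assembling the cases completes the proof. If $V$ is elliptic, $d\Phi(X)=cX_1+\varepsilon a_4X_4$ with $c\neq0$; rescaling by $1/c$ gives $X_1+bX_4$, the $G_{14}$ case. If $V$ is hyperbolic, the same yields $X_3+bX_4$, the $G_{34}$ case. If $V$ is parabolic, we obtain $cX_2+\varepsilon a_4X_4$, which already has the shape $aX_2+bX_4$, the $G_{24}$ case. Finally, if $V=0$ then $X=a_4X_4$, which is either zero (nothing to prove) or, after rescaling, equals $X_4=0\cdot X_2+1\cdot X_4$, again a $G_{24}$-field. The one genuinely nontrivial ingredient is the $\mathfrak{sl}(2,\r)$ trichotomy together with the verification that $X_1,X_2,X_3$ are honest orbit representatives; once those are in hand the reduction is purely formal.
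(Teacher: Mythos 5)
Your proposal cannot be compared against a proof in this paper, because the paper gives none: Proposition~\ref{lem-red} is imported verbatim from \cite{Onnis} as a known classification result. Judged on its own, your argument is correct and is essentially the standard proof of such reduction lemmas: reduce congruence of invariant surfaces to the adjoint action of $\mathrm{Isom}(\h^2\times\r)$ on its algebra of Killing fields (together with rescaling of the generator), use the splitting $\mathrm{Isom}(\h^2\times\r)=\mathrm{Isom}(\h^2)\times\mathrm{Isom}(\r)$ so that the $X_4$-component is only flipped in sign, and then invoke the elliptic/parabolic/hyperbolic trichotomy in $\mathfrak{sl}(2,\r)\cong\mathrm{span}\{X_1,X_2,X_3\}$ with $X_1,X_2,X_3$ as orbit representatives; your invariant $Q(V)=a_1^2+2a_1a_2-a_3^2$ is indeed (four times) the determinant of the corresponding traceless matrix and detects the type correctly, as one checks on $X_1$ (zero at $i$), $X_2$ (one ideal fixed point) and $X_3$ (two ideal fixed points). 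The only steps a referee might ask you to expand are the de Rham splitting of the isometry group and the statement that any two nonzero elements of $\mathfrak{sl}(2,\r)$ of the same type are conjugate up to a nonzero scalar under the full (orientation-reversing included) isometry group; both are classical, and your case assembly, including the degenerate case $V=0$ landing in $G_{24}$ with $a=0$, is complete.
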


To understand the shape of an invariant surface in $\h^2\times\r$ we need to describe the
orbits of the three groups  $G_{24}, G_{34}$ and $G_{14}$.
A direct computation shows that the orbit of a point $p_0=(x_o,y_o,z_o)\in \h^2\times\r$ is:

$\bullet$\quad {\em under the action of $G_{24}$} the curve parametrized by
$$
(a\,v+x_o,y_o,b\, v +z_o),\quad v\in(-\epsilon,\epsilon),
$$
which looks like an Euclidean line on the plane $y=y_0$;

$\bullet$\quad {\em under the action of $G_{34}$} the curve parametrized by
\begin{equation}\label{eq:orbitsg34}
(e^v x_o,e^v y_o,b\, v+z_o),\quad v\in(-\epsilon,\epsilon),
\end{equation}
which belongs to a vertical plane through the $z$-axes and looks like a logarithms curve;

$\bullet$\quad {\em under the action of $G_{14}$} the curve parametrized by
$$
(x(v),y(v),b\, v+z_o),\quad v\in(-\epsilon,\epsilon),
$$
where 
$$ 
x(v)^2+y(v)^2-\beta\, y(v)+1=0,\quad \beta=\frac{1+x_0^2+y_0^2}{y_0},
$$
which looks like an Euclidean helix in a right circular cylinder with Euclidean axes in the plane $x=0$.

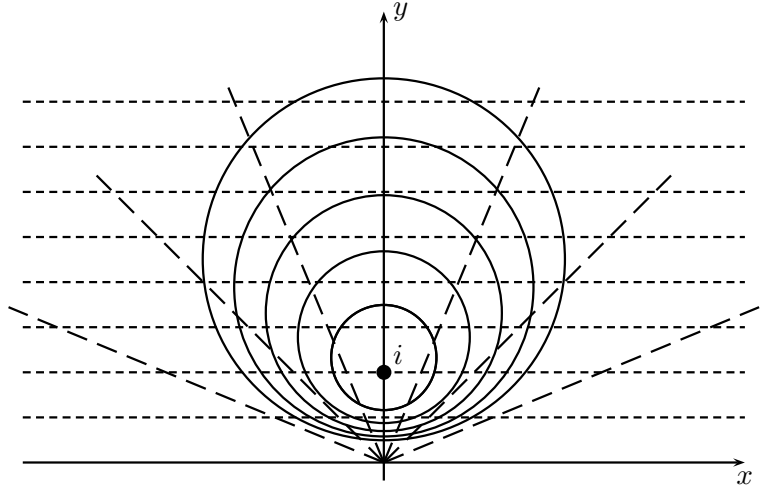
\begin{figure}[h!]
\begin{pspicture}(-4,-0.5)(4,5.9)
\psset{xunit=1.2cm,yunit=1.2cm,linewidth=.03}
\psaxes[labels=none,ticks=none]{->}(0,0)(-4,-0.2)(4,5)
\rput[t](4,-0.1){$x$}
\rput[l](0.1,5){$y$}

\rput[l](0.1,1.2){$i$}

\psdots[dotsize=0.2](0,1)

\pscircle[runit=2.4cm](0,1.16388){0.297751}
\pscircle[runit=2.4cm](0,1.3895){0.482366}
\pscircle[runit=2.4cm](0,1.6562){0.660112}
\pscircle[runit=2.4cm](0,1.94693){0.835243}
\pscircle[runit=2.4cm](0,1.16388){0.297751}
\pscircle[runit=2.4cm](0,2.25293){1.00942}

\psplot[linestyle=dashed,dash=8pt 4pt]{0}{4.15746}{x  0.414214 mul}
\psplot[linestyle=dashed,dash=8pt 4pt]{0}{3.18198}{x 1 mul}
\psplot[linestyle=dashed,dash=8pt 4pt]{0}{1.72208}{x 2.41421 mul}
\psplot[linestyle=dashed,dash=8pt 4pt]{0}{-1.72208}{x -2.41421 mul}
\psplot[linestyle=dashed,dash=8pt 4pt]{0}{-3.18198}{x -1 mul}
\psplot[linestyle=dashed,dash=8pt 4pt]{0}{-4.15746}{x -0.414214 mul}


\psline[linestyle=dashed,dash=3pt 2pt]{-}(-4,0.5)(4,0.5)
\psline[linestyle=dashed,dash=3pt 2pt]{-}(-4,1)(4,1)
\psline[linestyle=dashed,dash=3pt 2pt]{-}(-4,1.5)(4,1.5)
\psline[linestyle=dashed,dash=3pt 2pt]{-}(-4,2)(4,2)
\psline[linestyle=dashed,dash=3pt 2pt]{-}(-4,2.5)(4,2.5)
\psline[linestyle=dashed,dash=3pt 2pt]{-}(-4,3)(4,3)
\psline[linestyle=dashed,dash=3pt 2pt]{-}(-4,3.5)(4,3.5)
\psline[linestyle=dashed,dash=3pt 2pt]{-}(-4,4)(4,4)
\end{pspicture}
\caption{Orthogonal projection to the hyperbolic plane of the orbits relative to the three types of Killing vector fields: the horizontal lines are the orbits of $G_{24},\,a\neq 0$ (for $a=0$ the orbits are lines orthogonal to the hyperbolic plane); the lines through the origin are the orbits of $G_{34}$; the circles are the orbits of $G_{14}$.}
\label{fig:orbits}
\end{figure}

To give a geometric meaning of the Clairaut's relation in $\h^2\times\r$ we compute the function $\omega$ for the three types 
of invariant surfaces. To this aim we first recall the formula for the hyperbolic distance between two points in the half-plane model:
$$
d_{\h}(p,q)=\left\{
\begin{array}{lll}
\displaystyle{\left|\ln\left(\frac{x_p-\xi+R}{x_q-\xi+R}\;\frac{y_q}{y_p}\right)\right|}&\text{if}& x_p\neq x_q,\\
&&\\
\displaystyle{\left|\ln\left(\frac{y_q}{y_p}\right)\right|}&\text{if}& x_p= x_q,
\end{array}
\right.
$$
where $p=(x_p,y_p)$, $q=(x_q,y_q)$ while $R$ and $\xi$ represent, respectively,  the radius and the 
abscissa of the center of the geodesic through $p$ and $q$ (see Figure~\ref{fig:hyp-geodesic}).
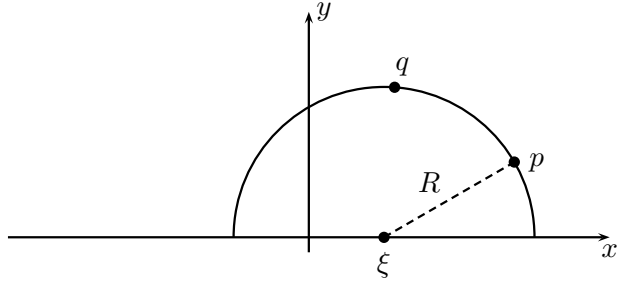
\begin{figure}[h!]
\begin{pspicture}(-4,-0.3)(4,3)
\psset{xunit=1cm,yunit=1cm,linewidth=.03}
\psaxes[labels=none,ticks=none]{->}(0,0)(-4,-0.2)(4,3)

\rput[t](4,-0.1){$x$}

\rput[l](0.1,3){$y$}
\rput[t](1,-0.2){$\xi$}
\rput[l](2.93205,1){$p$}
\rput[b](1.24147, 2.15){$q$}
\rput[b](1.6,0.6){$R$}

\psline[linestyle=dashed,dash=3pt 2pt]{-}(1,0)(2.73205,1)
\psarc(1,0){2}{0}{180}
\psdots[dotsize=0.15](1,0)
\psdots[dotsize=0.15](2.73205,1)
\psdots[dotsize=0.15](1.14147, 1.99499)

\end{pspicture}
\caption{Hyperbolic geodesic through $p$ and $q$.}
\label{fig:hyp-geodesic}
\end{figure}

We have

{\bf $\mathbf G_{24}$-surfaces}. In this case $\omega^2=(a^2+b^2y^2)/y^2$. An orbit through $p_0=(x_0,y_o,z_0)$ is a line contained in the plane $y=y_0$. Thus the hyperbolic distance of any point of the orbit to the plane $y=1$ is constant and equal to $d=|\ln(y_0)|$. Then the  Clairaut's relation becomes:
$$
\sqrt{a^2\,e^{2\epsilon d}+b^2} \cos\theta=c,
$$
with $\epsilon=\pm 1$ according to the sign of $(1-y_0)$. When $a=0$ the surface is invariant by vertical translations. In this case $\omega$ is constant everywhere which means that a geodesic must cut all orbits by the same angle.

{\bf $\mathbf G_{34}$-surfaces}. Introducing cylindrical coordinates $(r,\alpha,z)$ in $\h^2\times\r$, where $(r,\alpha)$ are polar coordinates in $\h^2$, a straightforward  computation gives $\omega^2=(1 + b^2\sin^2\alpha)/\sin^2\alpha$.
As an orbit belongs to a vertical plane through the $z$-axes, all of its points have constant hyperbolic distance from the plane $x=0$ equal to 
$$
d=\ln\left(\frac{\sqrt{1+4\tan^2\alpha}+1}{\sqrt{1+4\tan^2\alpha}-1}\right).
$$
Then, computing $\omega$ in terms of $d$, yields  the Clairaut's relation 
$$
\sqrt{2 \cosh d+b^2-1} \cos\theta=c.
$$

{\bf $\mathbf G_{14}$-surfaces}. This is the most interesting case, in fact the orbits are helices whose projections into the hyperbolic plane are geodesic circles with center at the point $i=(0,1,0)$. In fact, the hyperbolic distance from any point $p=(x,y,0)$ of the projection of the orbit of a fixed point $p_0=(x_0,y_0,z_0)$,  to $i$ is constant and equal to 
$$
d=\ln\left(\frac{\beta+\sqrt{\beta^2-4}}{2}\right),\quad \beta=\frac{1+x_0^2+y_0^2}{y_0}.
$$
A direct check shows that
$$
\omega^2=\frac{\beta^2}{4}+b^2-1.
$$
We then get the Clairaut's relation 
\begin{equation}\label{eq:clairaut-g124*}
\sqrt{\sinh^2d+b^2} \cos\theta=c.
\end{equation}
When $b=0$ the orbits of $G_1$ are geodesic circles and the invariant surfaces are called {\em rotational surfaces}. 
\subsection{The Clairaut's relation for rotational surfaces in the Bianchi-Cartan-Vranceanu spaces}
The Bianchi-Cartan-Vranceanu spaces (see \cite{Bi,Ca,Vr}) are described by  the following two-parameter 
family of Riemannian metrics 
\begin{equation}\label{1.1}
 g_{\ell,m} =\frac{dx^{2} + dy^{2}}{[1 + m(x^{2} + y^{2})]^{2}} +  \left(dz + 
\frac{\ell}{2} \frac{ydx - xdy}{[1 + m(x^{2} + y^{2})]}\right)^{2},\quad \ell,m \in {\r}
\end{equation}
defined on $M=\r^3$ if $m\geq 0$ and  on $M=\{(x,y,z)\in\r^3: x^2+y^2<-1/m\}$ otherwise.
Their geometric interest lies in the 
following fact: {\it the family of metrics (\ref{1.1}) 
includes all three-dimensional 
homogeneous metrics whose group of isometries has dimension $4$ or  
$6$, except for those of constant negative sectional curvature}.
The group of isometries of these spaces contains a one-parameter subgroup isomorphic to $SO(2)$.
The surfaces invariant by the action of $SO(2)$ are clearly called {\em rotational surfaces}. 
If we assume that the symmetry axes is the $z$-axes, then the infinitesimal generator of the group 
$SO(2)$ is the Killing vector field
$$
X=y\frac{\partial}{\partial x}-x\frac{\partial}{\partial y}.
$$
The orbits of $X$ are geodesic circles on horizontal planes with centre on the $z$-axes and the Clairaut's relation becomes
\begin{equation}\label{eq:clairaut-h3}
\omega\, \cos\theta=\sqrt{g_{m,\ell}(X,X)}\, \cos\theta=\frac{r}{2(1+m\, r^2)}\sqrt{4 +\ell^2 r^2} \cos\theta = c,
\end{equation}
where $r$ represents the Euclidean radius of the orbit and $\theta$ is the angle between the velocity vector of the geodesic and $X$. 
This Clairaut's relation was first found by P.~Piu and M.~Profir in \cite{PiuProfir} by a direct computation.
We now write down the Clairaut's relation in terms of the geodesic radius $d_{m,\ell}$ of the orbit.  For this, we recall that the geodesic on $(M,g_{\ell,m})$ tangent at the origin to the vector ${\partial}/{\partial x}$ is parametrized, according to the value of $m$, by:
$$
\begin{array}{lll}
\displaystyle{\alpha(s)=\left(\frac{1}{\sqrt{m}} \tan(\sqrt{m} s),0,0\right)}&\text{if}&m>0,\\
&&\\
\displaystyle{\alpha(s)=\left(\frac{1}{\sqrt{-m}} \tanh(\sqrt{-m} s),0,0\right)}&\text{if}& m<0,\\
&&\\
\displaystyle{\alpha(s)=\left(s,0,0\right)}&\text{if}& m=0.
\end{array}
$$
Since the curve $\alpha$ is parametrized by arc length the geodesic radius is 
$d_{m,\ell}=s_1$, where $\alpha(s_1)=(r,0,0)$. Replacing the value of $d_{m,\ell}$ in
\eqref{eq:clairaut-h3} we find the following geometric Clairaut's relations:

\begin{equation}\label{eq:clairaut-glm}
\begin{array}{cll}
\displaystyle{\frac{\sin(2\sqrt{m}\,d_{m,\ell})\sqrt{4m+\ell^2\tan^2(\sqrt{m}\,d_{m,\ell})}}{4m}\; \cos\theta = c}&\text{if}&m>0,\\
&&\\
\displaystyle{\frac{ \sinh(2\sqrt{-m}\,d_{m,\ell})\sqrt{\ell^2\tanh^2(\sqrt{-m}\,d_{m,\ell})-4m}}{-4m}\;\cos\theta = c}&\text{if}& m<0,\\
&&\\
\displaystyle{\frac{d \sqrt{4+\ell^2 d^2}}{2}\;\cos\theta = c}&\text{if}& m=0.
\end{array}
\end{equation}

\begin{remark}
For $\ell=0$ and $m=-1/4$ the metric $g_{-{1}/{4},0}$ is isometric to the metric \eqref{eq:metric-h2xr}, this is in agreement with the fact that  the Clairaut's relation \eqref{eq:clairaut-glm}, for $\ell=0$ and $m=-1/4$, 
coincides with \eqref{eq:clairaut-g124*} for $b=0$.
\end{remark}

\section{Integral formula for the geodesics}

In this section we give an integral formula to parametrize, locally, the geodesics on an invariant surface which are not orbits.

\begin{lemma}\label{tgeod}
Let ${M}^2\subset ({N}^3,g)$ be a $G_X$-invariant surface locally parametrized by
$\psi(u,v)$ (see \eqref{eq-psi}) and let $\alpha (s)=\psi(u(s),v(s))$ be a geodesic parametrized by arc length,
which is not an orbit, and with slant $c$. Then the following holds:
 \begin{equation}\label{eqsist}
    \left\{\begin{aligned}
    &F(u(s))\,{u'}+\omega(u(s))^2\,{v'}=c,\\
    &{u'}^2=1-\frac{c^2}{\omega(u(s))^2}.
    \end{aligned}\right.
 \end{equation}
Conversely, if system~\eqref{eqsist} is satisfied and ${u'}\neq 0$, then $\alpha$ is a geodesic parametrized by arc length and slant $c$.
 \end{lemma}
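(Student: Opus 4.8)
The plan is to exploit the two equivalent characterizations already available: the Euler--Lagrange system \eqref{eqgeod} and its reformulation \eqref{eqgeod-bis} from Remark~\ref{re:eqgeod-bis}. Since $\alpha$ is assumed not to be an orbit, we have $u'(s)\neq 0$, so we may freely pass between \eqref{eqgeod} and \eqref{eqgeod-bis}. The first equation of \eqref{eqsist} is nothing but the conserved quantity coming from the second equation of \eqref{eqgeod}, exactly as in the proof of Clairaut's Theorem: integrating $(F u' + \omega^2 v')' = 0$ gives $F u' + \omega^2 v' = c$, and the constant $c$ is by definition the slant of $\alpha$.

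For the second equation, the idea is to combine the arc length condition with the conservation law to eliminate $v'$. First I would use \eqref{eq-E}, i.e. $EG - F^2 = \omega^2$ with $G = \omega^2$, to rewrite $E = (\omega^2 + F^2)/\omega^2 = 1 + F^2/\omega^2$. Then, starting from the unit-speed condition $E u'^2 + 2F u' v' + \omega^2 v'^2 = 1$ (the first equation of \eqref{eqgeod-bis}), I would solve the conservation law for $v'$, namely $v' = (c - F u')/\omega^2$, and substitute. A direct computation gives
\begin{equation}\nonumber
E u'^2 + 2 F u' \cdot \frac{c - F u'}{\omega^2} + \omega^2 \cdot \frac{(c-Fu')^2}{\omega^4} = u'^2\Big(E - \frac{F^2}{\omega^2}\Big) + \frac{c^2}{\omega^2} = u'^2 + \frac{c^2}{\omega^2},
\end{equation}
where the last step uses $E - F^2/\omega^2 = 1$. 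Setting this equal to $1$ yields $u'^2 = 1 - c^2/\omega^2$, which is the second equation of \eqref{eqsist}.

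The converse is then essentially the same computation read backwards, together with the equivalence in Remark~\ref{re:eqgeod-bis}. Assume \eqref{eqsist} holds with $u'\neq 0$. From the first equation I recover $v' = (c - F u')/\omega^2$; substituting into $E u'^2 + 2F u' v' + \omega^2 v'^2$ and using $E = 1 + F^2/\omega^2$ gives $u'^2 + c^2/\omega^2$, which by the second equation of \eqref{eqsist} equals $1$. Hence the first equation of \eqref{eqgeod-bis} holds. The second equation of \eqref{eqgeod-bis}, namely $(Fu' + \omega^2 v')' = 0$, is immediate since $Fu' + \omega^2 v' = c$ is constant by the first equation of \eqref{eqsist}. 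By Remark~\ref{re:eqgeod-bis}, \eqref{eqgeod-bis} is equivalent to \eqref{eqgeod}, so $\alpha$ is a geodesic; and the relation $u'^2 + c^2/\omega^2 = 1 = \|\alpha'\|^2$ shows it is parametrized by arc length, with slant $c$ by definition.

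There is no real obstacle here — the statement is a bookkeeping reorganization of facts already established. The one point to be careful about is the sign/branch issue: the second equation of \eqref{eqsist} only determines $u'$ up to sign, so strictly speaking the lemma records $u'^2$ rather than $u'$; this is harmless for the purpose of the integral formula in the next section, where one integrates $du/\sqrt{1 - c^2/\omega^2}$ on each interval where $u$ is monotone. I would simply note this and not belabor it.
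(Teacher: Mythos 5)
Your proof is correct and follows essentially the same route as the paper's: both derive the first equation of \eqref{eqsist} as the conserved quantity from the second geodesic equation, eliminate $v'$ from the unit-speed condition via the conservation law together with the identity $E\omega^2-F^2=\omega^2$ from \eqref{eq-E}, and handle the converse by reversing the computation and invoking Remark~\ref{re:eqgeod-bis}. The only cosmetic difference is that you substitute $v'=(c-Fu')/\omega^2$ explicitly where the paper first records the identity $2Fu'v'+\omega^2v'^2=(c^2-F^2u'^2)/\omega^2$; your closing remark on the sign ambiguity of $u'$ is a harmless and accurate addition.
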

 \begin{proof}
Firstly, observe that the first equation of \eqref{eqsist} coincides with the second of \eqref{eqgeod} and, also,
it implies that
\begin{equation}\label{eqc}
2\,F(u(s))\,{u'}\,{v'}+\omega(u(s))^2\,{v'}^2=\frac{c^2-F(u(s))^2\,{u'}^2}{\omega(u(s))^2}.
\end{equation}

Now, if $\alpha$ is a geodesic parametrized by arc length, then the vector field
$\alpha'(s)=\psi_u\,u'(s)+\psi_v\,v'(s)$ satisfies
\begin{equation}\label{arc}
   1=g(\alpha',\alpha')= E(u(s))\, u'^2+2\, F(u(s))\,u'\,v'+\omega (u(s))^2\,v'^2.
\end{equation}
Therefore, substituting  \eqref{eqc} in \eqref{arc}, and using \eqref{eq-E} (i.e. $E\,\omega^2-F^2=\omega^2$), we obtain the second equation of \eqref{eqsist}: 
\begin{equation}
\nonumber
1=\frac{E(u(s))\,\omega(u(s))^2-F(u(s))^2}{\omega(u(s))^2}\,{u'}^2+\frac{c^2}{\omega(u(s))^2}={u'}^2+\frac{c^2}{\omega(u(s))^2}.
\end{equation}

Conversely, if system~\eqref{eqsist} is satisfied, from \eqref{eq-E}  and \eqref{eqc}, we have
\begin{equation}\label{eqarc2}\begin{aligned}
1&={u'}^2+\frac{c^2}{\omega(u(t))^2}=
\bigg(\frac{E(u(t))\,\omega(u(t))^2-F(u(t))^2}{\omega(u(t))^2}\bigg)\,{u'}^2+\frac{c^2}{\omega(u(t))^2}\\
&=E(u(t))\,{u'}^2+\frac{c^2-F(u(t))^2\,{u'}^2}{\omega(u(t))^2}\\&=E(u(t))\,{u'}^2+2\,F(u(t))\,{u'}\,{v'}+\omega(u(t))^2\,{v'}^2\\&=
g(\alpha',\alpha'),
\end{aligned}
\end{equation}
so that $\alpha$ has unit speed. Finally, since $\alpha$ is not an orbit, from Remark~\eqref{re:eqgeod-bis}, we conclude.
 \end{proof}

Integrating system~\eqref{eqsist} we have the following 
\begin{theorem}
Every geodesic on a $G_X$-invariant surface ${M}^2\subset ({N}^3,g)$, which is not an orbit, can be locally parametrized by $\beta(u)=\psi(u,v(u))$, where
 \begin{equation}\label{eq-integral}
    v(u)=\int_{u_0}^u \left(\frac{-F}{\omega^2} \pm \frac{c}{\omega\sqrt{\omega^2-c^2}}\right)\,du.
\end{equation}
and $c$ is the slant of $\alpha$.
\end{theorem}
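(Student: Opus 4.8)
The plan is to derive the statement from Lemma~\ref{tgeod} by replacing the arc‑length parameter $s$ along $\alpha$ with the coordinate $u$. First I would check that this reparametrization is legitimate. Since $\alpha$ is not an orbit, Remark~\ref{re:omegac} gives $\omega(u(s))>|c|$ all along $\alpha$, so the second equation of \eqref{eqsist} yields
\begin{equation}\nonumber
u'(s)^2=1-\frac{c^2}{\omega(u(s))^2}>0 ,
\end{equation}
hence $u'$ never vanishes. Being continuous and nowhere zero, $u'$ has constant sign, so $s\mapsto u(s)$ is a strictly monotone diffeomorphism onto an interval; inverting it we may write the $v$‑coordinate of $\alpha$ as a function $v=v(u)$, and then locally $\alpha$ is the graph $\beta(u)=\psi(u,v(u))$.

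Next I would compute $dv/du$. By the chain rule $v'(s)=\frac{dv}{du}(u(s))\,u'(s)$, and the first equation of \eqref{eqsist} gives $v'(s)=\frac{c-F\,u'(s)}{\omega^2}$; dividing by $u'(s)\neq 0$,
\begin{equation}\nonumber
\frac{dv}{du}=\frac{c}{\omega^2\,u'(s)}-\frac{F}{\omega^2}.
\end{equation}
From the second equation of \eqref{eqsist} one has $u'(s)=\pm\frac{\sqrt{\omega^2-c^2}}{\omega}$ with a sign that is constant along $\alpha$ (note $\omega>0$ on the regular part); substituting this into the last identity gives
\begin{equation}\nonumber
\frac{dv}{du}=-\frac{F}{\omega^2}\pm\frac{c}{\omega\sqrt{\omega^2-c^2}}.
\end{equation}
Integrating from a base value $u_0=u(s_0)$, with the constant of integration chosen so that $v(u_0)$ is the $v$‑coordinate of $\alpha(s_0)$, produces exactly \eqref{eq-integral}.

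There is no real obstacle here; the only point that needs care is the verification that $u$ is an admissible local coordinate along $\alpha$, i.e. that $u'$ does not change sign, and this is precisely what the strict inequality $\omega>|c|$ of Remark~\ref{re:omegac} guarantees. I would also stress that the parametrization is only local: the integrand in \eqref{eq-integral} is singular at points where $\omega^2=c^2$, which are the turning points of $u$ lying on the boundary of the region $\omega\geq|c|$ to which $\alpha$ is confined, and near such a point one must pass to another chart, continuing \eqref{eq-integral} with the opposite sign. Conversely, one checks directly that any $\beta(u)=\psi(u,v(u))$ with $v$ as in \eqref{eq-integral}, once reparametrized by arc length via $ds/du=\pm\omega/\sqrt{\omega^2-c^2}$, satisfies system~\eqref{eqsist} and is therefore, by Lemma~\ref{tgeod}, a geodesic with slant $c$.
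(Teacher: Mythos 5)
Your proof is correct and rests on the same foundations as the paper's: Lemma~\ref{tgeod} together with the observation that $u'\neq 0$ lets one invert $u=u(s)$ and regard $v$ as a function of $u$ along the geodesic. Where you genuinely diverge is in the computation of $dv/du$. The paper multiplies the unit-speed identity by $(ds/du)^2$, substitutes $(ds/du)^2=\omega^2/(\omega^2-c^2)$ coming from the second equation of \eqref{eqsist}, and so arrives at a \emph{quadratic} equation in $dv/du$ whose discriminant must then be simplified with the help of \eqref{eq-E}; the two roots account for the $\pm$ in \eqref{eq-integral}. You instead solve the first (linear, Clairaut) equation of \eqref{eqsist} for $v'$ and divide by $u'=\pm\sqrt{\omega^2-c^2}/\omega$, which yields the same formula with no quadratic and no further appeal to \eqref{eq-E}, since that identity has already been consumed in the proof of Lemma~\ref{tgeod}; your route is shorter and makes the meaning of the sign transparent, namely the sign of $u'$. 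Your additional remarks --- that the sign is locally constant because $u'$ is continuous and nonvanishing, that the integrand degenerates at turning points where $\omega^2=c^2$ so the parametrization is only local and the sign must be switched when continuing past such a point, and the converse direction via Lemma~\ref{tgeod} --- do not appear in the paper's proof but are correct and sharpen the statement's ``locally'' in a useful way.
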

\begin{proof}
Suppose that $M$ is locally parametrized by
$\psi(u,v)$ (see \eqref{eq-psi}) and let $\alpha (s)=\psi(u(s),v(s))$ be a
geodesic on $M$ parametrized by arc length, that is not an orbit. As ${u'}\neq 0$ we can, locally, invert the function $u=u(s)$ obtaining $s=s(u)$ and, therefore, we can consider the parametrization of $\alpha$ given by
$$\beta(u)=\alpha(s(u))=\psi(u,v(u)),\qquad v(u)=v(s(u)).$$

Multiplying  the equation 
$$
E(u(s))\, u'(s)^2+2\, F(u(s))\,u'(s)\,v'(s)+\omega (u(s))^2\,v'(s)^2=g(\alpha',\alpha')=1
$$ 
by $(ds/du)^2$ we get
\begin{equation}\label{eqpri}
E+2\,F\,\frac{dv}{du}+\omega^2\,\Big(\frac{dv}{du}\Big)^2=\Big(\frac{ds}{du}\Big)^2.
\end{equation}
Also, from the second equation of \eqref{eqsist}, we have that 
\begin{equation}\label{eqpri1}
\frac{ds}{du}_{\big|u(s)}=\frac{1}{{u'}(s)}=\frac{\omega(u(s))^2}{\omega(u(s))^2-c^2},\qquad c\in\r.
\end{equation}
Substituting \eqref{eqpri1} in \eqref{eqpri} we obtain
$$\omega^2\,\Big(\frac{dv}{du}\Big)^2+2\,F\,\frac{dv}{du}+E-\frac{\omega^2}{\omega^2-c^2}=0.$$
Now, using \eqref{eq-E}, we get
$$
F^2-\omega^2\,\Big[E-\frac{\omega^2}{\omega^2-c^2}\Big]=\frac{c^2\,\omega^2}{\omega^2-c^2}.
$$
Finally it results that
\begin{equation}\label{eqcor}
   \frac{dv}{du}=\frac{-F\pm\dfrac{c\,\omega}{\sqrt{\omega^2-c^2}}}{\omega^2},
\end{equation}
which implies that the equation of a geodesic segment (that is not an orbit) on an invariant surface is given by \eqref{eq-integral} as required.

\end{proof}

We now describe explicitly how to parametrize the invariant surfaces and, using \eqref{eq-integral}, how to plot the geodesics.

\begin{example}[The funnel surface]\label{ex:funnel}
Let consider the case of $G_{34}$ invariant surfaces in $\h^2\times\r$.  We shall use cylindrical 
coordinates $(r,\theta,z)$ for  $\h^2\times\r$ and the coordinates $\{\xi_1,\xi_2\}$ for the orbit space 
$\h^2\times\r/G_{34}=\{(\xi_1,\xi_2)\in\r^2\,:\, \xi_1\in(0,\pi)\}$, where $\{\xi_1,\xi_2\}$ are invariant functions with respect to the action of $G_{34}$.
Then, endowing the orbit space with the quotient metric 
$$
\tilde{g}=\frac{d\xi_1^2}{\sin^2\xi_1}+\frac{d\xi_2^2}{b^2+\sin^2\xi_1},
$$
the projection
$$
(r,\theta,z)\xmapsto{\pi}(\theta,z-b\ln r)
$$
becomes a Riemannian submersion. 
The simplest curve in $\h^2\times\r/G_{34}$, choosing $b=1$, is $\xi_2=0$, of which a parametrization by arc length is
$$
\tilde{\gamma}(u)=(2 \arccot e^{-u},0).
$$
A lift of $\tilde{\gamma}$ with respect to $\pi$ is
$$
\gamma(u)=(1,2 \arccot e^{-u},0).
$$
The corresponding invariant surface is parametrized, in rectangular coordinates and  according to \eqref{eq-psi} and \eqref{eq:orbitsg34}, by
\begin{eqnarray*}
\psi(u,v)&=&(e^v \cos(2 \arccot e^{-u}), e^v \sin(2 \arccot e^{-u}),v)\\
&=&(-e^v \tanh u, e^v \sech u,v).
\end{eqnarray*}

This surface is very well known because it is a complete minimal surfaces in $\h^2\times\r$ that can be thought as the graph of the function $z=\ln(x^2+y^2)/2$ and due  to its shape is known as the {\em funnel surface}. The coefficients of the induced metric are $E=1$, $F=0$ and $G=\omega^2=2+\sinh^2u$. Now, using  \eqref{eq-integral}, the geodesics, which are not orbits, can be parametrized by
$$
\alpha(u)=\psi\left(u,\int_{u_0}^u \frac{c}{\sqrt{\sinh^2t+2}\sqrt{2-c^2+\sinh^2t}}\,dt\right).
$$
To understand which orbits are geodesics we can use Proposition~\ref{orbit} and find that an orbit $u=u_0$ is a geodesic if and only if
$$
\omega_u(u_0)=\frac{\sinh u_0\, \cosh u_0}{\sqrt{\sinh^2u_0+2}}=0,
$$
that is $u_0=0$ and the corresponding slant is $c=\sqrt{2}$.
In Figure~\ref{fig-funnel} we show the plot of five geodesics through the point $p=(0,1,0)$ for different values of the slant $c$.
Moreover, in this case, all the curves with slant $c=0$ are geodesics. 
\begin{figure}[h!]
{\includegraphics[width=0.885\textwidth]{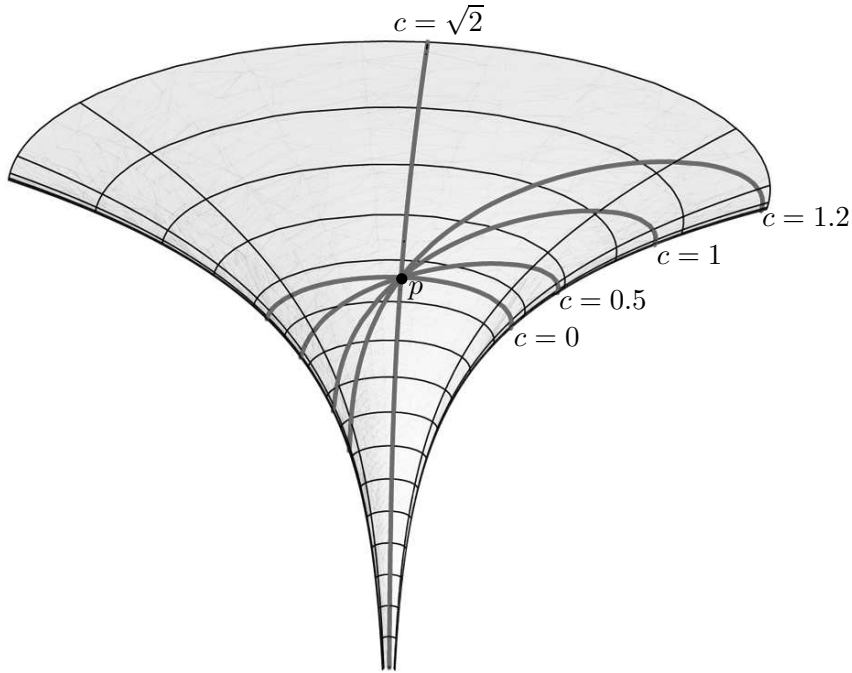}}
\begin{pspicture}(-5,0)(5,0)
\psset{xunit=1cm,yunit=1cm,linewidth=.03}
\put(-.3,9.3){$c=\sqrt{2}$}
\put(4.6,6.7){$c=1.2$}
\put(3.2,6.2){$c=1$}
\put(1.9,5.6){$c=0.5$}
\put(1.3,5.1){$c=0$}
\put(-0.1,5.8){$p$}
\psdots[dotsize=0.15](-0.19,6.0)

\end{pspicture}

\caption{Five geodesics on the funnel surfaces though the point $p=(0,1,0)$ as seen from the viewpoint of coordinates $(1,10,-4)$; the geodesic with slant $c=\sqrt{2}$ is the only geodesic which is an orbit.}
\label{fig-funnel}

\end{figure}
\end{example}

\begin{remark}\label{re:lifthor}
Note that, in general, is rather difficult to parametrize an invariant surface using a horizontal lift of the profile curve. To illustrate this consider the case of $G_{34}$-invariant surfaces described in Example~\ref{ex:funnel}. Given a curve $\tilde{\gamma}(s)=(\xi_1(s),\xi_2(s))$ in the orbit space $\h^2\times\r/G_{34}=\{(\xi_1,\xi_2)\in\r^2\,:\, \xi_1\in(0,\pi)\}$, a horizontal lift is a curve $\gamma(s)=(r(s),\theta(s),z(s))$ such that
\begin{equation}\label{eq:lifthor}
\begin{cases}
\displaystyle{\theta=\xi_1+c_1},\quad c_1\in\r\\
\displaystyle{z=b\ln r +\xi_2+c_2},\quad c_2\in\r\\
\displaystyle{\frac{r'}{r}= -\frac{b \sin^2(\xi_1+c_1)}{b^2 \sin^2(\xi_1+c_1)+1}\xi_2'}.
\end{cases}
\end{equation}
The first two conditions of \eqref{eq:lifthor} guaranty that $\gamma$ is a lift, while the third 
one says that $\gamma'$ is orthogonal to the Killing vector field $X=X_3+bX_4$, i.e. $\gamma'$ is horizontal.  The difficulty in solving  \eqref{eq:lifthor} lies in the expression of the profile curve. In the case of the funnel surface $\xi_2=0$ and the solution is trivial. 

On the other hand, a lift, not necessarily horizontal,  of  $\tilde{\gamma}(s)=(\xi_1(s),\xi_2(s))$ must only satisfy
$$
\begin{cases}
\displaystyle{\theta=\xi_1+c_1}\\
\displaystyle{z=b\ln r +\xi_2+c_2}
\end{cases}
$$
and, for example, for the choice $r(s)=1$ we get that the curve
$\gamma(s)=(1,\xi_1(s)+c_1,\xi_2(s)+c_2)$ is a lift of any given profile curve.
\end{remark}
\section{Geodesics of invariant surfaces with constant Gauss curvature}

In this section we consider the case of a $G_X$-invariant surface ${M}^2\subset ({N}^3,g)$ such that the induced metric is of constant Gauss curvature. For this case we shall limit our investigation to the case when 
the lift $\gamma$, used to construct the parametrization of the surface \eqref{eq-psi}, is horizontal.  With this
assumption \eqref{eqcor} can be integrated on the same pattern as the case of rotational surfaces in the Euclidean space (see, for example, \cite[Pag. 185]{pressley}).

\begin{proposition}[Positive curvature] Let ${M}^2\subset ({N}^3,g)$ be a $G_X$-invariant surface of
constant positive Gauss curvature $K=1/R^2$, locally parametrized by
$\psi(u,v)$ (see \eqref{eq-psi}) with $\gamma$ horizontal lift. Then a geodesic on ${M}^2$, which is not an orbit, with slant $c\neq 0$, can be parametrized by
\begin{equation}
    v(u)=\frac{R}{\sqrt{a}}\,\arcsin\Big(-\frac{c\,R}{\sqrt{a-c^2}}\,\frac{{\omega}_u(u)}{\omega(u)}\Big)+b,\quad a,b\in\r,\, a> 0.
\end{equation}
\end{proposition}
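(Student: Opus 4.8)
The plan is to start from the integral formula \eqref{eq-integral} specialized to a horizontal lift, where $F=0$ and $E=1$, so that
\[
v(u)=\int_{u_0}^u\frac{c}{\omega\sqrt{\omega^2-c^2}}\,du,
\]
and then to evaluate this integral explicitly using the fact that, for a surface of constant positive Gauss curvature $K=1/R^2$, Theorem~\ref{teo-main} (equation \eqref{eq-main}) forces $\omega$ to satisfy $\omega_{uu}+\omega/R^2=0$. Hence $\omega(u)=\sqrt{a}\,\sin((u-u_1)/R)$ for some constants $a>0$ and $u_1$, or equivalently $\omega=A\cos(u/R)+B\sin(u/R)$ with $a=A^2+B^2$. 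The key observation is that $\omega_u^2$ can be written in terms of $\omega^2$: differentiating and using the ODE one gets the first integral $R^2\omega_u^2+\omega^2=a$, i.e. $\omega_u^2=(a-\omega^2)/R^2$.

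The main computation is then to recognize the integrand as an exact derivative. I would guess the antiderivative based on the target formula: set $w(u)=-\dfrac{cR}{\sqrt{a-c^2}}\cdot\dfrac{\omega_u}{\omega}$ and check that $\dfrac{d}{du}\Big(\dfrac{R}{\sqrt a}\arcsin w\Big)$ equals the integrand. Computing, $\dfrac{d}{du}\arcsin w=\dfrac{w'}{\sqrt{1-w^2}}$, where $w'=-\dfrac{cR}{\sqrt{a-c^2}}\cdot\dfrac{\omega_{uu}\omega-\omega_u^2}{\omega^2}$. Using $\omega_{uu}=-\omega/R^2$ and $\omega_u^2=(a-\omega^2)/R^2$ gives $\omega_{uu}\omega-\omega_u^2=-\omega^2/R^2-(a-\omega^2)/R^2=-a/R^2$, so $w'=\dfrac{cR}{\sqrt{a-c^2}}\cdot\dfrac{a}{R^2\omega^2}=\dfrac{ca}{R\sqrt{a-c^2}\,\omega^2}$. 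For the denominator, $1-w^2=1-\dfrac{c^2R^2}{a-c^2}\cdot\dfrac{\omega_u^2}{\omega^2}=1-\dfrac{c^2R^2}{a-c^2}\cdot\dfrac{a-\omega^2}{R^2\omega^2}=1-\dfrac{c^2(a-\omega^2)}{(a-c^2)\omega^2}=\dfrac{(a-c^2)\omega^2-c^2(a-\omega^2)}{(a-c^2)\omega^2}=\dfrac{a\omega^2-ac^2}{(a-c^2)\omega^2}=\dfrac{a(\omega^2-c^2)}{(a-c^2)\omega^2}$. Therefore $\sqrt{1-w^2}=\dfrac{\sqrt a\,\sqrt{\omega^2-c^2}}{\sqrt{a-c^2}\,\omega}$ (taking the branch where $\omega>0$ and $\omega^2>c^2$, which holds by Remark~\ref{re:omegac}), and assembling,
\[
\frac{d}{du}\Big(\frac{R}{\sqrt a}\arcsin w\Big)=\frac{R}{\sqrt a}\cdot\frac{w'}{\sqrt{1-w^2}}
=\frac{R}{\sqrt a}\cdot\frac{ca}{R\sqrt{a-c^2}\,\omega^2}\cdot\frac{\sqrt{a-c^2}\,\omega}{\sqrt a\,\sqrt{\omega^2-c^2}}=\frac{c}{\omega\sqrt{\omega^2-c^2}},
\]
which is exactly the integrand. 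Integrating and absorbing the value at $u_0$ and the choice of branch/constants into the single constant $b$ yields the stated parametrization.

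The one genuine subtlety — and the place I would be careful in writing it up — is the bookkeeping of constants and sign branches: one must note that $a-c^2>0$ (which follows from Remark~\ref{re:omegac} since $\omega^2\le a$ everywhere and $\omega^2>c^2$ along a non-orbit geodesic, forcing $c^2<a$), and that the two signs $\pm$ in \eqref{eq-integral} are accounted for by the two branches of $\arcsin$ together with the freedom in $b$, so only one formula need be recorded. Everything else is the routine differentiation above, so I do not anticipate a real obstacle beyond presenting that verification cleanly.
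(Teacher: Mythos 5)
Your proposal is correct and follows essentially the same route as the paper: both use the first integral $\omega^2+R^2\omega_u^2=a$ of \eqref{eq-main}, the resulting identity $\omega_{uu}\omega-\omega_u^2=-a/R^2$, the bound $a-c^2>0$ from Remark~\ref{re:omegac}, and the same substitution $\eta=-\tfrac{cR}{\sqrt{a-c^2}}\tfrac{\omega_u}{\omega}$ (you merely verify the antiderivative by differentiation rather than changing variables inside the integral, which is the same computation run backwards).
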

\begin{proof}
First, as $K=1/R^2$, from \eqref{eq-main}, we have
\begin{equation}\label{erre}
     R^2\omega_{uu} (u)+\omega (u)=0.
\end{equation}
From {\eqref{erre}} it results that
$$
\frac{d}{du}(\omega(u)^2+R^2\,\omega_u(u)^2)=2\,\omega_u(u)\,(\omega(u)+R^2\,\omega_{uu}(u))=0,
$$
which implies that there exists a constant $a\in\r,\, a> 0,$ such that
\begin{equation}\label{bi}
\omega(u)^2+R^2\,\omega_u(u)^2=a.
\end{equation}
Combining \eqref{erre} and \eqref{bi}, we find
\begin{equation}\label{bi1}
   {\omega}_{uu}\,\omega-{\omega}_{u}^2=-\frac{a}{R^2}.
\end{equation}
Also, from \eqref{bi}, and taking into account Remark~\ref{re:omegac}, we have
 $$
 0<\omega^2-c^2=(a-c^2)-R^2\,{\omega}_{u}^2,
 $$
 which implies that $(a-c^2)>0$.
We can then consider the change of variables 
 $$
 \eta(u)=-\frac{c\,R}{\sqrt{a-c^2}}\,\frac{{\omega}_{u}(u)}{\omega(u)}.
 $$
Therefore, taking into account \eqref{bi1}, we get
\begin{equation}\label{dw}
    d\eta=\frac{c\,a}{R\,\sqrt{a-c^2}}\,\frac{du}{\omega^2}
\end{equation}
and, using \eqref{bi},
\begin{equation}\label{dww}
    \sqrt{1-\eta^2}=\frac{\sqrt{a}\,\sqrt{\omega^2-c^2}}{\omega\,\sqrt{a-c^2}}.
\end{equation}
Finally, integrating  \eqref{eq-integral}, we have
\begin{equation}\label{eqvu}\begin{aligned}
    v(u)&=\int \dfrac{c}{\omega\,\sqrt{\omega^2-c^2}}\,du=
    \frac{R}{\sqrt{a}}\int\frac{d\eta}{\sqrt{1-\eta^2}}\\&=\frac{R}{\sqrt{a}}\,\arcsin \eta+b\\
    &=\frac{R}{\sqrt{a}}\,\arcsin\Big(-\frac{c\,R}{\sqrt{a-c^2}}\,\frac{{\omega}_{u}(u)}{\omega(u)}\Big)+b,\quad b\in\r.
\end{aligned}
\end{equation}
\end{proof}

Let now consider the case of constant negative curvature. Before doing this note that, as $K=-1/R^2$,  \eqref{eq-main} becomes
\begin{equation*}\label{erre1}
     \omega(u)-R^2\omega_{uu} (u)=0,
\end{equation*}
which implies that
\begin{equation*}\label{bi2}
\omega(u)^2-R^2\,\omega_u(u)^2=a,\quad a\in\r.
\end{equation*}
And the latter two imply 
\begin{equation}\label{bi3}
   {\omega}_{uu}\,\omega-{\omega}_u^2=\frac{a}{R^2}.
\end{equation}
In this case, differently from the case of positive curvature, the constant $a$ can be any real number.
Performing changes of variables, similar to the case of constant positive curvature, \eqref{eq-integral}
can be integrated and gives:
\begin{proposition}[Negative curvature] Let ${M}^2\subset ({N}^3,g)$ be a $G_X$-invariant surface of
constant negative Gauss curvature $K=-1/R^2$, locally parametrized by
$\psi(u,v)$ (see \eqref{eq-psi}) with $\gamma$ horizontal lift. Then a geodesic on ${M}^2$, which is not an orbit, with slant $c\neq 0$, can be parametrized by
\begin{equation*}
  \begin{array}{lll}
   \displaystyle{v(u)=\frac{\sqrt{\omega^2-c^2}}{c\,{\omega}_u}+b}, & \text{if} & a=0,\\
   &&\\
   \displaystyle{ v(u)=\frac{R}{\sqrt{-a}}\,\arcsin\Big(-\frac{c\,R}{\sqrt{c^2-a}}\,\frac{\omega_u(u)}{\omega(u)}\Big)+b}, & \text{if} & a<0,\\
    &&\\
   \displaystyle{ v(u)=\frac{R}{\sqrt{a}}\,\ln\Big(\frac{c\,R\,{\omega_u}+\sqrt{a\,(\omega^2-c^2)}}{\omega\,\sqrt{c^2-a}}\Big)+b},& \text{if} & 0<a<c^2,\\
    &&\\
    \displaystyle{ v(u)=\frac{R}{\sqrt{a}}\,\sinh^{-1}\Big(\frac{c\,R}{\sqrt{a-c^2}}\,\frac{{\omega_u}(u)}{\omega(u)}\Big)+b},& \text{if} & a>c^2,\\
     &&\\
    \displaystyle{ v(u)=\frac{R}{2c}\ln\Big(\frac{\omega_u^2}{\omega^2}\Big)+b}, & \text{if} & a=c^2,
  \end{array}
\end{equation*}
where $b\in\r$.
\end{proposition}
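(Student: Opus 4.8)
The plan is to mirror exactly the argument given for the positive curvature case, making the modifications forced by the sign change in \eqref{eq-main}. First I would record the first-order consequence of $\omega - R^2\omega_{uu}=0$: multiplying by $\omega_u$ and integrating gives the conserved quantity $\omega^2 - R^2\omega_u^2 = a$ for some $a\in\r$, and then combining this with the second-order equation yields $\omega_{uu}\omega - \omega_u^2 = a/R^2$, which is \eqref{bi3}. The crucial observation, already flagged in the excerpt, is that here $a$ is not sign-constrained (unlike the positive-curvature case where $a = \omega^2 + R^2\omega_u^2 > 0$), so the integral in \eqref{eq-integral}, namely $v(u)=\int \tfrac{c}{\omega\sqrt{\omega^2-c^2}}\,du$, will produce five qualitatively different antiderivatives according to the position of $a$ relative to $0$ and to $c^2$.

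Next I would treat the generic cases $a\neq 0$ by the same substitution as before. Setting $\eta(u) = -\dfrac{cR}{\sqrt{|c^2-a|}}\,\dfrac{\omega_u}{\omega}$ (with the appropriate sign inside the square root in each regime), one computes using \eqref{bi3} that $d\eta = \dfrac{c\,a}{R\sqrt{|c^2-a|}}\,\dfrac{du}{\omega^2}$, and using $\omega^2 - R^2\omega_u^2 = a$ that $\dfrac{\omega^2-c^2}{\omega^2} = \dfrac{(c^2-a) \pm R^2\omega_u^2\cdot(\text{sign})}{\cdots}$ collapses to an algebraic expression in $\eta$: one gets $1-\eta^2$, $1+\eta^2$, or $\eta^2-1$ depending on the signs, so that $\dfrac{c\,du}{\omega\sqrt{\omega^2-c^2}} = \dfrac{R}{\sqrt{|a|}}\,\dfrac{d\eta}{\sqrt{|1\mp\eta^2|}}$. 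Integrating then gives $\arcsin$, $\sinh^{-1}$, or a logarithm, which accounts for the $a<0$, $a>c^2$, and $0<a<c^2$ rows of the statement respectively; Remark~\ref{re:omegac} guarantees $\omega^2 > c^2$ along the geodesic, which is what makes the radicands positive and the substitution legitimate.

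For the degenerate cases $a=0$ and $a=c^2$ the substitution above breaks down ($d\eta$ vanishes identically, or the radicand degenerates), so I would handle them directly. When $a=0$ we have $\omega^2 = R^2\omega_u^2$, hence $\omega = \pm R\omega_u$, and one can use this to rewrite $\dfrac{c}{\omega\sqrt{\omega^2-c^2}}\,du$ as an exact differential of $\dfrac{\sqrt{\omega^2-c^2}}{c\,\omega_u}$ — differentiating the candidate antiderivative and invoking \eqref{bi3} with $a=0$ (i.e. $\omega_{uu}\omega = \omega_u^2$) confirms it. When $a=c^2$, \eqref{bi3} reads $\omega_{uu}\omega - \omega_u^2 = c^2/R^2$; here I would check directly that $\dfrac{d}{du}\Big(\dfrac{R}{2c}\ln(\omega_u^2/\omega^2)\Big) = \dfrac{R}{c}\Big(\dfrac{\omega_{uu}}{\omega_u} - \dfrac{\omega_u}{\omega}\Big) = \dfrac{R}{c}\cdot\dfrac{\omega_{uu}\omega - \omega_u^2}{\omega_u\,\omega} = \dfrac{R}{c}\cdot\dfrac{c^2}{R^2\omega_u\,\omega}$, and that this equals $\dfrac{c}{\omega\sqrt{\omega^2-c^2}}$ once one substitutes $\omega^2 - c^2 = R^2\omega_u^2$ (valid precisely when $a=c^2$). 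The main obstacle is purely bookkeeping: keeping the signs inside the radicals and the branch of the inverse function consistent across the five regimes, and verifying in each degenerate case that the proposed closed form really is an antiderivative of the integrand in \eqref{eq-integral} — there is no conceptual difficulty beyond what the positive-curvature proof already contains.
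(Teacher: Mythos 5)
Your proposal is correct and follows essentially the same route as the paper: the paper itself only says the result follows by ``performing changes of variables similar to the case of constant positive curvature,'' and the substitution $\eta\propto\omega_u/\omega$ in the generic regimes (yielding $\arcsin$, $\log$, or $\sinh^{-1}$ according to the sign of $1\mp\eta^2$), together with the direct verification of the antiderivatives in the degenerate cases $a=0$ and $a=c^2$ using $\omega_{uu}\,\omega-\omega_u^2=a/R^2$, is exactly the computation the authors carry out. The only caveats are the sign/branch bookkeeping issues you already flag, which are present in the paper's own formulas as well (absorbed by the $\pm$ in \eqref{eq-integral}).
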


In the last case, that is when the Gauss curvature is zero, we have that $\omega_u=a\in\r$, and it can be handled  in the same way as before, giving
 \begin{proposition}[Flat case] Let ${M}^2\subset ({N}^3,g)$ be a flat $G_X$-invariant surface, locally parametrized by
$\psi(u,v)$ (see \eqref{eq-psi}) with $\gamma$ horizontal lift. Then a geodesic on ${M}^2$, which is not an orbit, with slant $c\neq 0$, can be parametrized by

\begin{equation*}
  \begin{array}{lll}
   \displaystyle{  v(u)=\frac{1}{a}\,\arctan\left(\frac{c}{\sqrt{\omega^2-c^2}}\right)+b}, & \text{if} & a\neq 0,\\
   &&\\
   \displaystyle{ v(u)=\frac{c}{\omega\sqrt{\omega^2-c^2}}\,u+b}, & \text{if} & a=0,  \end{array}
\end{equation*}
where $b\in\r$.
\end{proposition}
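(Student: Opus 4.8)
The plan is to specialize the integral formula \eqref{eq-integral} and then integrate it by an elementary substitution, exactly in the spirit of the two preceding propositions. Since the lift $\gamma$ is chosen horizontal we have $E\equiv 1$ and $F\equiv 0$, so for a non-orbit geodesic with slant $c\neq 0$ the formula \eqref{eq-integral} collapses to
\[
v(u)=\pm\int_{u_0}^{u}\frac{c}{\omega\,\sqrt{\omega^{2}-c^{2}}}\,du .
\]
Flatness enters through \eqref{eq-main} with $K=0$, which forces $\omega_{uu}\equiv 0$; hence $\omega_u\equiv a$ for some constant $a\in\r$ and $\omega$ is an affine function of $u$. By Remark~\ref{re:omegac}, along a geodesic that is not an orbit one has $\omega>|c|$, so the integrand above is everywhere defined and smooth, and this is precisely what makes the substitution below legitimate.

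If $a=0$, then $\omega$ is a nonzero constant, the integrand is itself constant, and a single integration gives $v(u)=\dfrac{c}{\omega\sqrt{\omega^{2}-c^{2}}}\,u+b$. If $a\neq 0$, I would pass to the new variable $\eta(u)=\dfrac{c}{\sqrt{\omega(u)^{2}-c^{2}}}$, well defined because $\omega>|c|$; differentiating and using $\omega_u=a$ one checks that
\[
\frac{d\eta}{1+\eta^{2}}=-\frac{a\,c}{\omega\,\sqrt{\omega^{2}-c^{2}}}\,du ,
\]
so that $\int \dfrac{c}{\omega\sqrt{\omega^{2}-c^{2}}}\,du=-\dfrac1a\arctan\eta+\mathrm{const}$, which yields $v(u)=\dfrac1a\arctan\!\Big(\dfrac{c}{\sqrt{\omega^{2}-c^{2}}}\Big)+b$ once the sign in \eqref{eq-integral} and the integration constant are absorbed into the orientation of $v$ and into $b$.

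I do not expect a genuine obstacle here: the mechanism is identical to that of the nonzero constant curvature propositions, only with $\omega$ now affine in $u$ instead of trigonometric or exponential, so the resulting primitive is an $\arctan$ rather than a trigonometric, hyperbolic, or logarithmic expression. The only points needing a line of care are that the change of variable $\eta$ is admissible — which is exactly Remark~\ref{re:omegac} — and that the antiderivative be recorded in the stated form; an equally valid choice of primitive, via the identity $\arctan\!\big(c/\sqrt{\omega^{2}-c^{2}}\big)=\arcsin(c/\omega)$, differs from it only by the additive constant already present as $b$.
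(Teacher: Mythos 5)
Your proof is correct and follows exactly the route the paper intends for this case (the paper itself only remarks that the flat case "can be handled in the same way as before"): specialize \eqref{eq-integral} using $F=0$, use $K=0$ in \eqref{eq-main} to get $\omega_u\equiv a$, and integrate by an elementary substitution, with Remark~\ref{re:omegac} guaranteeing $\omega>|c|$ so the substitution is admissible. The sign discrepancy you note is indeed absorbed by the $\pm$ already present in \eqref{eq-integral}, so nothing is missing.
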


\begin{remark}
The local expressions of the geodesics given in this section are particularly explicit. Nevertheless,
for being completely honest, we have to point out that they are true only in the case the invariant surface is parametrized by a horizontal lift of the profile curve and this, in the general case, makes things more complicated as explained in Remark~\ref{re:lifthor}.
\end{remark}

\end{document}